\newtheorem{theo}{Theorem}[section]
\newtheorem{propo}[theo]{Proposition}
\newfont{\nset}{msbm10}
\newcommand{\ns}[1]{\mbox{\nset #1}}
\def\M{{\bm{M}}}
\def\S{\mbox{\boldmath $S$}}
\def\U{\mbox{\boldmath $U$}}
\def\V{\mbox{\boldmath $V$}}
\def\Z{\ns{Z}}
\def\b{\mbox{\boldmath $b$}}
\def\e{\mbox{\boldmath $e$}}
\def\colc{\mbox{\boldmath $c$}}
\def\a{\mbox{\boldmath $a$}}
\def\b{\mbox{\boldmath $b$}}
\def\e{\mbox{\boldmath $e$}}
\def\Cay{\mathop{\rm Cay }\nolimits}
\def\circ{\mathop{\rm circ }\nolimits}
\def\det{\mathop{\rm det }\nolimits}
\def\diag{\mathop{\rm diag }\nolimits}
\def\mod{\mathop{\rm mod }\nolimits}
\def\vec0{\mbox{\boldmath $0$}}
\def\vecx{\mbox{\boldmath $x$}}
\def\vecy{\mbox{\boldmath $y$}}
\begin{document}

\title{Abelian Cayley digraphs with asymptotically \\ large order
for any given degree
\thanks{
Research supported by the ``Ministerio de Educaci\'on y Ciencia"
(Spain) with the European Regional Development Fund under projects
MTM2011-28800-C02-01
and by the Catalan Research Council under project 2014SGR1147.
\newline \indent
 Emails: {matfag@ma4.upc.edu, fiol@mat.upc.es, sonia@ma4.upc.edu}
            }
}
\author{F. Aguil\'o, M.A. Fiol and S. P\'erez\\
\\ {\small Departament de Matem\`atica Aplicada IV}
\\ {\small Universitat Polit\`ecnica de Catalunya}
\\ {\small Jordi Girona 1-3 , M\`odul C3, Campus Nord }
\\ {\small 08034 Barcelona, Catalonia (Spain). %; email: {\tt fiol@mat.upc.es}
}   }

%\date{}
\maketitle
\begin{abstract}
Abelian Cayley digraphs can be constructed by using a generalization to $\Z^n$ of the concept of congruence in $\Z$.
Here we use this approach to present a family of such digraphs, which, for every fixed value of the degree, have asymptotically large number of vertices as the diameter increases. Up to now, the best known asymptotically dense results were all non-constructive.
\end{abstract}

\noindent
{\bf Keywords.}
Cayley digraph, Abelian group, Degree/diameter problem, Congruences in $\Z^n$, Smith normal form.

\noindent
{\bf AMS subject classifications.} 05012, 05C25.

\section{Introduction}

The degree-diameter problem for graphs (directed or undirected) has been widely studied in the last decades because of its relevance to the design of some interconnection or communication networks for parallel processors.
This is because one wants to have a large number of processors without requiring a large 
number of links, and  without incurring long delays in communication from one 
processor to another.
%See \cite{BeHeBaAr91} and for some examples.

In terms of directed graphs, the degree-diameter problem consists of, given a degree $d$ and a diameter $k$,  maximizing
the number of vertices of a digraph with maximum degree $d$ and diameter $k$. We refer to the survey Miller and Sir\'an \cite{ms13} for the current state of the art, and to the
the survey of Bermond, Comellas, and Hsu \cite{BeCoHs95} for more details about the history of the problem.

A desirable extra property of such dense digraphs is to be vertex-transitive (in particular regular, with in-degree and out-degree of every vertex equal to $d$). Then, the network is seen identically from any processor and one can implement easily all protocols, being basically the same for each node.  In fact, most of the literature so far has focused on a very important class of vertex-transitive digraphs, the Cayley digraphs  (see e.g. \cite{WoCo74,HsJi94,FiYeAlVa87}).
Let us recall that a {\em Cayley digraph} of a group $\Gamma$ with respect to a generating set $A$, denoted by
$G=\Cay(\Gamma ;A)$, is the digraph whose vertices are labeled with the
elements of $\Gamma$, and there is an arc $(u,v)$ if and only if $v-u \in A$.

In this paper we deal exclusively with the degree-diameter problem when the group $\Gamma$ of the Cayley digraph is an Abelian group.

Let $N\!A_{d,k}$ (respectively, $N\!C_{d,k}$) be the maximum number of vertices that a Cayley digraph of an Abelian group (respectively, of a cyclic group), with  degree $d$ and diameter $k$, can have.
In this framework, Wong and Coppersmith \cite{WoCo74} proved that, for fixed degree $d$ and large diameter $k$,
\begin{equation}
\label{boundsWC}
\left(\frac{k}{d} \right)^d+O(k^{d-1})\le N\!C_{d,k} \le \frac{k^d}{d!}+O(k^{d-1}).
\end{equation}
The exact value of $N\!C_{d,k}$ is only known
for the case of degree $d=2$, where the authors of \cite{MoFiFa85,FiYeAlVa87} proved that, for any $k\ge 2$,
\begin{equation}
 N\!C_{2,k}=\left\lceil \frac{(k+2)^2}{3}\right\rceil -1.\label{eq:NCmaxd2}
\end{equation}

The \textit{density}, $\delta(G)$, of a Cayley digraph $G$ on a $d$--generated finite Abelian group of order $N$ is defined by
\begin{equation}
\delta(G)=\frac{N}{(k(G)+d)^d}.\label{eq:dens}
\end{equation}
For $d=2$, from (\ref{eq:NCmaxd2}) and (\ref{eq:NAmaxd2}), we have $\delta\leq\frac13+\frac1{(k+2)^2}$. For $d=3$, Fiduccia, Forcade and  Zito \cite[Corollary~3.6]{FiFoZi98} gave the upper bound $N\!A_{3,k}\leq\frac3{25}(k+3)^3$ and so, for this degree, it follows that $\delta\leq\frac3{25}=0.12$. Several authors gave different families of dense Cayley digraphs of cyclic groups, also known in the literature as loop networks. Table~\ref{tab:d3} summarizes some of the results for the well-studied case $d=3$. The entry of $\delta^*$ has to be understood in the asymptotical sense for large values of $k$.

\begin{table}[h]
\centering
\begin{tabular}{|l|l|l|}\hline
Authors&$\delta^*$&Conditions on $k$\\\hline\hline
G\'omez, Guti\'errez \& Ibeas \cite{GoGuIb07}&$0.0370$&$k\equiv0,1,2\pmod{3}$\\
%Hsu \& Jia \cite{HsJi94}&$0.0620$&\\
Aguil\'o, Fiol \& Garc\'{\i}a \cite{AgFiGa97}&$0.0740$&$k\equiv2,4,5\pmod{6}$\\
Chen \& Gu \cite{ChGu92}&$0.0780$& No condition on $k$ \\
Aguil\'o \cite{Ag99}&$0.0807$&$k=22t+12$, $t\not\equiv2,7\pmod{10}$\\
Aguil\'o, Sim\'o \& Zaragoz\'a \cite{AgSiZa01}&$0.0840$&$k\equiv2\pmod{30}$\\\hline
\end{tabular}
%\vspace*{0.01\linewidth}
\caption{Several cyclic constructions for $d=3$}
\label{tab:d3}
\end{table}

Notice that the fact $N=\alpha k^d+O(k^{d-1})$ does not necessarily imply $\delta=\alpha$. For instance, a result of Dougherty and Faber \cite[Corollary~8.2]{DoFa04} gives the existence of cyclic Cayley digraphs of degree $d=3$ and order $N=0.084k^3+O(k^2)$ for all $k$. Table~\ref{tab:figures} gives numerical evidence that this result is not difficult to achieve; however, equality $\delta=0.084$ is not. As far as we know, for the cyclic case, there is only one known value of $4\leq N\leq8000$ that achieves this density, that is $N=84$ with (for instance) $G=\Cay(\Z_{84},\{2,9,25\})$ and $k(G)=7$. Moreover, no known order achieves the upper bound of Fiduccia, Forcade and Zito for the cyclic case. As far as we know, no explicit proposal is known achieving $\delta^*=0.084$, in the sense of Table~\ref{tab:d3}, for all $k$.

\begin{table}[h]
\centering
\begin{tabular}{|l|l|r|r|r|r|}\hline
$k$&$\delta$&$\lceil0.084k^3\rceil$&$\lceil\frac3{25}k^3\rceil$&$\lfloor\frac3{25}(k+3)^3\rfloor$& $N\!C_{3,k}$\\\hline\hline
$1$ & $0.06250$ & $1$ & $1$ & $7$ & $4$ \\ \hline
$2$ & $0.07200$ & $1$ & $1$ & $13$ & $9$ \\ \hline
$3$ & $0.07407$ & $3$ & $4$ & $24$ & $16$ \\ \hline
$4$ & $0.07872$ & $6$ & $8$ & $38$ & $27$ \\ \hline
$5$ & $0.07812$ & $11$ & $15$ & $56$ & $40$ \\ \hline
$6$ & $0.07819$ & $19$ & $26$ & $81$ & $57$ \\ \hline
$7$ & $0.08400$ & $29$ & $42$ & $111$ & $84$ \\ \hline
$8$ & $0.08340$ & $44$ & $62$ & $147$ & $111$ \\ \hline
$9$ & $0.07986$ & $62$ & $88$ & $192$ & $138$ \\ \hline
$10$ & $0.08011$ & $84$ & $120$ & $244$ & $176$ \\ \hline
\end{tabular}
%\vspace*{0.01\linewidth}
\caption{Several density values for $d=3$ and $1\leq k\leq10$ in the cyclic case}
\label{tab:figures}
\end{table}

It is worth to mention the work of R\"odseth~\cite{Rod96} on weighted loop networks that gave sharp lower bounds for the diameter and mean distance for $d=2$ and general bounds for $d\geq3$ as well.

As mentioned above, the concern of our paper are Cayley digraphs of Abelian groups.
In this case, the bounds in \eqref{boundsWC} also apply for $N\!A_{d,k}$.
In  \cite[Th. 1.1]{MaSchJi11} Mask, Schneider, and Jia claimed to show that, for any $d$ and $k$,
$N\!A_{d,k}=N\!C_{d,k}$, but Fiol in \cite{Fi14} corrected this claim by giving some counterexamples for the case $d=2$. In fact, we have that for such a degree
Fiol et al. in \cite{MoFiFa85,FiYeAlVa87}:
\begin{propo}
For any diameter $k\ge 2$,
\begin{equation}
N\!A_{2,k}=\left\{
\begin{array}{ll}
N\!C_{2,k}+1, & \mbox{if $k\equiv 1$\ $(\mod 3)$}, \\
N\!C_{2,k}, & \mbox{otherwise.}
\end{array}
\right.\label{eq:NAmaxd2}
\end{equation}
\end{propo}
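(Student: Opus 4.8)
The plan is to reduce the extremal problem for $d=2$ to a finite optimization over L-shaped minimum distance diagrams, and then to compare the resulting value with the known formula $N\!C_{2,k}=\lceil(k+2)^2/3\rceil-1$ recorded in \eqref{eq:NCmaxd2}. Writing $\Gamma=\Z^2/L$ with $L=\Ker\phi$ for the surjection $\phi\colon\Z^2\to\Gamma$, $\phi(i,j)=ia_1+ja_2$, determined by the two generators, the condition $k(G)\le k$ says exactly that every coset of $L$ meets the triangle $\{(i,j):i,j\ge0,\ i+j\le k\}$. I would invoke the structure theorem for these digraphs (from \cite{MoFiFa85,FiYeAlVa87}, and recoverable in the $\Z^n$-congruence/Smith-normal-form language of the present paper): every $2$-generated Abelian Cayley digraph admits an L-shaped minimum distance diagram $H=H(l,h,w,y)$, a rectangle of $l$ columns and $h$ rows with a $w\times y$ corner removed, so that $N=lh-wy$ and the diameter equals $l+h-\min\{w,y\}-2$. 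Establishing (or citing) this reduction is the main structural step; everything afterwards is elementary.

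Assuming the reduction, I would first prove the uniform real bound $N\!A_{2,k}\le(k+2)^2/3$. Setting $m=\min\{w,y\}$, one has $wy\ge m^2$, hence $N=lh-wy\le lh-m^2$, while the diameter constraint gives $l+h\le k+2+m$; by the AM--GM inequality $lh\le\bigl(\tfrac{l+h}{2}\bigr)^2\le\bigl(\tfrac{k+2+m}{2}\bigr)^2$, so
\begin{equation}
N\ \le\ \frac{(k+2+m)^2}{4}-m^2 \qquad (m\ge 0).
\end{equation}
The right-hand side is a concave quadratic in $m$ maximized at $m=(k+2)/3$, where it equals $(k+2)^2/3$; this yields the claimed upper bound for every $k$.

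Next I would supply the matching construction when $k\equiv1\pmod 3$. Then $3\mid k+2$, so $l=h=\tfrac{2(k+2)}{3}$ and $w=y=\tfrac{k+2}{3}$ are positive integers with $w<l$ and $y<h$, defining a genuine L-shape with $N=lh-wy=\tfrac{(k+2)^2}{3}$ and diameter $l+h-\min\{w,y\}-2=k$. Its lattice is generated by the columns of $\tfrac{k+2}{3}\left(\begin{smallmatrix}2&-1\\-1&2\end{smallmatrix}\right)$, whose Smith normal form gives $\Gamma\cong\Z_{(k+2)/3}\times\Z_{k+2}$; since the first invariant factor $(k+2)/3>1$ for every admissible $k\ (\ge4)$, this group is \emph{non-cyclic}. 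Hence $N\!A_{2,k}\ge(k+2)^2/3$, and together with the upper bound $N\!A_{2,k}=(k+2)^2/3$.

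Finally I would assemble the two cases using \eqref{eq:NCmaxd2}. If $k\equiv1\pmod 3$ then $(k+2)^2/3$ is an integer, so $N\!C_{2,k}=(k+2)^2/3-1$ and therefore $N\!A_{2,k}=N\!C_{2,k}+1$. If $k\not\equiv1\pmod 3$ then $k+2\not\equiv0\pmod3$, whence $(k+2)^2\equiv1\pmod3$ and $(k+2)^2/3$ is not an integer; the real bound together with the integrality of $N\!A_{2,k}$ forces $N\!A_{2,k}\le\lfloor(k+2)^2/3\rfloor=\lceil(k+2)^2/3\rceil-1=N\!C_{2,k}$, while $N\!A_{2,k}\ge N\!C_{2,k}$ holds trivially because cyclic groups are Abelian. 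Equality then follows, completing the proof. The one genuinely hard ingredient is the L-shape characterization of the minimum distance diagrams; once that is in hand, the degree-$2$ case is settled by the single scalar inequality above together with a divisibility check modulo $3$.
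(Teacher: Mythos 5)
Your proposal is correct, but there is an important mismatch of expectations: the paper contains no proof of this proposition at all. It is quoted as a known result of \cite{MoFiFa85,FiYeAlVa87}, with the correction history involving \cite{MaSchJi11}, \cite{Fi14} and the comments in \cite{DoFa04}, so the paper's only ``proof'' is a citation. What you have written is a correct, essentially self-contained reconstruction of the argument living in those cited sources, and it follows the same route they do (L-shaped minimum distance diagrams). The computational steps all check out: (i) with $m=\min\{w,y\}$ the diameter constraint gives $l+h\le k+2+m$, and AM--GM yields $N=lh-wy\le\frac{(k+2+m)^2}{4}-m^2$, a concave quadratic in $m$ maximized at $m=(k+2)/3$ with value $(k+2)^2/3$; (ii) when $3\mid k+2$, the L-shape with $l=h=\frac{2(k+2)}{3}$, $w=y=\frac{k+2}{3}$ has order $(k+2)^2/3$ and diameter $k$, and its lattice matrix $\frac{k+2}{3}\bigl(\begin{smallmatrix}2&-1\\-1&2\end{smallmatrix}\bigr)$ has Smith normal form $\diag\bigl(\frac{k+2}{3},\,k+2\bigr)$, so the group is $\Z_{(k+2)/3}\oplus\Z_{k+2}$, non-cyclic for $k\ge 4$ as you note; (iii) the mod-$3$ bookkeeping against $N\!C_{2,k}=\lceil(k+2)^2/3\rceil-1$ is right, including $\lfloor(k+2)^2/3\rfloor=\lceil(k+2)^2/3\rceil-1$ when $3\nmid k+2$. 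One point of care for a write-up: you actually rely on \emph{two} borrowed facts, and you flag only one. Besides the hard direction (every $2$-generated Abelian Cayley digraph has an L-shaped minimum distance diagram with $N=lh-wy$ and diameter $l+h-\min\{w,y\}-2$), your construction in step (ii) also uses the converse direction, namely that an L-shape with $w\le l$, $y\le h$ tessellates the plane under the lattice spanned by $(l,-y)^\top$ and $(-w,h)^\top$, so that the associated Cayley digraph genuinely has diameter at most $l+h-\min\{w,y\}-2$. Both directions are indeed proved in the cited references, so your logical debt is exactly the one the paper itself incurs by citation, but the tiling direction should be cited explicitly rather than folded into the phrase ``its lattice is generated by the columns of''.
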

(see also the comments of Dougherty and Faber in \cite{DoFa04}).

%An example and its hiper-L:
%Let $\Gamma=\Z_{111}$ and $A=\{1,31,69\}$.
%\begin{figure}
%%\begin{center}
%%\vskip -.3cm
%%\hskip -8cm
%\includegraphics[width=16cm]{IEEE-TC}
%%\end{center}
%\end{figure}

In general, if $\Gamma$ is an Abelian group and $|A|=d$, the Cayley digraph $\Cay(\Gamma,A)$ has order:
$$
N\!A_{d,k}< {k+d\choose d}={k+d\choose k}.
$$
Then, it is not difficult to see that:
$$
k> \sqrt[d]{d!N\!A_{d,k}}-\frac{1}{2}(d+1).
$$

Nevertheless, as far as we know, the best upper bound known for the order of Cayley digraphs of Abelian groups is the one of
Dougherty and Faber
in \cite[Theorem~21]{DoFa04}, where they proved the following non-constructive result:\newline
{\it There is a positive constant $c$ (not depending on $d$ or $k$), such that, for any fixed $d\ge 2$ and any $k$, there exist Cayley digraphs of Abelian groups on $d$ generators having diameter at most $k$ and number of vertices $N_{d,k}$ satisfying}
$$
N\!A_{d,k}\ge \frac{c}{d(\ln d)^{1+\log_2 e}}\frac{k^d}{d!}+O(k^{d-1}).
$$

In our study, we use the following approach developed by Aguil\'o, Esqu\'e and Fiol \cite{EsAgFi93,Fi95}: Every Cayley digraph $G$ from an Abelian group $\Gamma$ is fully characterized by an integral
$n\times n$ matrix $\M$ such that $\Gamma=\Z^n/\M\Z^n$ (the so-called `group of integral $n$-vectors modulo $\M$' that is detailed in Section~\ref{sec:thbk}). Then, in such a representation, the digraph $G$ is isomorphic to $\Cay(\Z^n/\M\Z^n,A)$, where $A$ is the set of unitary coordinate vectors $\e_i$, $i=1,\ldots,n$.

The plan of the paper is as follows. In the following section and for the sake of completeness, we will recall the algebraic background on which the above isomorphism is based. Afterwards, in contrast with the theoretical bound of Dougherty and Faber, we will present an explicit infinite family of Cayley digraphs of Abelian groups whose order is asymptotically large. This family will be constructed using a generalization of the concept of congruence in $\Z$ to $\Z^n$.

\section{Some theoretical background}\label{sec:thbk}

In this section we recall some basic concepts and results on which our study
is based (see \cite{EsAgFi93,Fi87,Fi95} for further details).

\subsection{Congruences in $\Z^{n}$}
Given an nonsingular integral $n\times n$ matrix $\M$, we say that
the integral vectors $\a,\b\in\Z^n$ are {\em congruent modulo $\M$} (see \cite{Fi87}) when its difference belongs to the lattice generated by $\M$, that is,
\begin{equation}
\label{cong-nDIM}
\a\equiv \b \quad (\mod \M)\qquad \iff \qquad \a-\b\in \M\Z^n.
\end{equation}
So,  as the quotient group $\Z_m=\Z/m\Z$ is the cyclic group of integers modulo $m$,
the quotient group $\Z^n_{\M}=\Z^{n}/\M\Z^{n}$ can intuitively be called the
{\it group of integral vectors modulo $\M$} (where each equivalence class is identified by any of its representatives).

In particular, notice that,
when $\M=\diag (m_1,m_2,\ldots,m_n)$, \eqref{cong-nDIM} implies that the vectors $\a=(a_1,a_2,\ldots,a_n)^\top$ and
$\b=(b_1,b_2,\ldots,b_n)^\top$ are congruent modulo $\M$ if and only if
$$
a_i\equiv b_i\quad (\mod m_i)\qquad (1\le i\le n).
$$
Moreover, in this case $\Z^n_{\M}$ is the direct product of the cyclic groups
$\Z_{m_i}$, $i=1,2,\ldots,n$.

\subsection{The Smith normal form}
As before, let $\M=(m_{ij})$ be a nonsingular matrix of $\Z^{n\times n}$, with $N=\det \M\neq 0$.
Let $k\in \Z$, $1\leq k\leq n$. The $k$th {\it determinantal divisor} of
$\M$, denoted by $d_{k}(\M)=d_{k}$, is defined as the greatest common
divisor of the $(^{n}_{k})^{2}$  $k\times k$ determinantal minors of $\M$.
%Since $\M$ is nonsingular, not all of them are zero.
Notice that $d_{k}\mid d_{k+1}$ for all $k=1,2,\ldots ,n-1$ and $d_{n}=m$. For convenience,
put $d_{0}=1$.
The {\it invariant factors} of $\M$ are the quantities
$$
s_{k}(\M) = s_{k} = \frac{d_{k}}{d_{k-1}},\qquad k=1,2,\ldots ,n.
$$
It can be shown that $s_{i}\mid s_{i+1}$, $i=1,2,\ldots ,n-1$.

%%%%%%%%%%%%%%%%%%%%%%%%%%%%%%%%%%%%%%%%%%%%%%%%%%%%%%%%%%%%%%%%%%%%
By the {\em Smith normal form} theorem, $\M$ is equivalent to the diagonal
matrix $\S(\M)=\S=\;$diag$(s_{1},s_{2},\ldots ,s_{n})$, i.e. there are two unimodular matrices $\U,\V\in\Z^{n\times n}$ such that $\S=\U\M\V$. This canonical form $\S$ is unique and the unimodular matrices $\U$ and $\V$ are not. For more details, see e.g. Newman \cite{Ne72}.

\begin{propo} {\rm (Fiol~\cite{Fi95})}
\label{pro2.1}
Set $\M\in\Z^{n\times n}$ with $N=|\det\M|$.
\begin{itemize}
\item[$(a)$]
The number of equivalence classes modulo $\M$ is
 $|\Z^{n}/\M\Z^{n}| = N$.
\item[$(b)$]
 If $p_{1}^{r_{1}}p_{2}^{r_{2}} \cdots p_{t}^{r_{t}}$ is the prime
factorization of $N$, then $\Z^{n}/\M\Z^{n} \cong \Z^{r}/\S'\Z^{r}$ for some
$r\times r$ matrix $\S'$ with $r \leq \max \{r_{i} :1 \leq i \leq t \}$.
\item[$(c)$]
 The $($Abelian$)$ group of integral vectors modulo $\M$ is cyclic
if and only if $d_{n-1}=$1.
\item[$(d)$]
 Let $r$ be the smallest integer such that $s_{n-r}=1$. Then $r$ is
the rank of $\Z^{n}/\M\Z^{n}$ and the last $r$ columns of $\U^{-1}$ form a basis
of $\Z^{n}/\M\Z^{n}$.\ $\Box$
\end{itemize}
\end{propo}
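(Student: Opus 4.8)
The plan is to reduce every statement to the diagonal Smith normal form and then read each item off the invariant factors; the one fact doing all the work is the isomorphism $\Z^n/\M\Z^n\cong\Z^n/\S\Z^n$. To establish it I would start from $\S=\U\M\V$ with $\U,\V$ unimodular, note that $\U$ and $\V$ are automorphisms of the lattice $\Z^n$ (their inverses being again integral), and compute $\U\M\Z^n=\S\V^{-1}\Z^n=\S\Z^n$. Hence the automorphism $\vecx\mapsto\U\vecx$ of $\Z^n$ carries the sublattice $\M\Z^n$ onto $\S\Z^n$, and therefore induces a group isomorphism $\phi\colon\Z^n/\M\Z^n\to\Z^n/\S\Z^n$, $\phi([\vecx])=[\U\vecx]$. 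Since $\S=\diag(s_1,\ldots,s_n)$, the diagonal case recalled in Section~\ref{sec:thbk} gives $\Z^n/\S\Z^n\cong\Z_{s_1}\times\cdots\times\Z_{s_n}$, so that
$$
\Z^n/\M\Z^n\cong\Z_{s_1}\times\Z_{s_2}\times\cdots\times\Z_{s_n}.
$$

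With this in hand, $(a)$ and $(c)$ are immediate. For $(a)$, the order of the right-hand product is $s_1s_2\cdots s_n=d_n/d_0=d_n=|\det\M|=N$, the product telescoping since $s_k=d_k/d_{k-1}$. For $(c)$, a product $\Z_{s_1}\times\cdots\times\Z_{s_n}$ with $s_1\mid s_2\mid\cdots\mid s_n$ is cyclic if and only if at most one factor is nontrivial: any two nontrivial factors $s_i,s_j$ with $i<j$ share the common divisor $s_i>1$, so the group they generate is not cyclic. Thus the group is cyclic iff $s_1=\cdots=s_{n-1}=1$, which, since $d_{n-1}=\prod_{k=1}^{n-1}s_k$, is exactly the condition $d_{n-1}=1$.

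For $(b)$ I would discard the trivial factors: writing $r$ for the number of invariant factors exceeding $1$ (these are $s_{n-r+1},\ldots,s_n$, because of the divisibility chain), the group is isomorphic to $\Z_{s_{n-r+1}}\times\cdots\times\Z_{s_n}\cong\Z^r/\S'\Z^r$ with $\S'=\diag(s_{n-r+1},\ldots,s_n)$. The substantive point is the bound $r\le\max\{r_i\}$. Here I would pick a prime $p_j$ dividing the smallest nontrivial factor $s_{n-r+1}$; by $s_{n-r+1}\mid s_{n-r+2}\mid\cdots\mid s_n$ this prime then divides all $r$ of the nontrivial factors, each contributing at least $1$ to its exponent in $N=\prod_k s_k$, so $r_j\ge r$ and hence $r\le\max\{r_i\}$.

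Finally, for $(d)$ I would track a basis through $\phi$. In $\Z^n/\S\Z^n\cong\Z_{s_1}\times\cdots\times\Z_{s_n}$ the class of $\e_i$ has order $s_i$ and generates the $i$th cyclic summand, so the classes of $\e_{n-r+1},\ldots,\e_n$ form a basis adapted to the invariant-factor decomposition, and $r$ (the number of nontrivial factors, equivalently the smallest $r$ with $s_{n-r}=1$) is the rank. Applying $\phi^{-1}([\vecy])=[\U^{-1}\vecy]$, the class of $\e_i$ pulls back to the class of the $i$th column of $\U^{-1}$; hence the last $r$ columns of $\U^{-1}$ form a basis of $\Z^n/\M\Z^n$. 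The main thing to get right is the direction of the change of basis, namely that it is $\U$ (not $\U^{-1}$) that diagonalizes the lattice while $\U^{-1}$ is the matrix whose columns yield the basis, together with the bookkeeping that the divisibility order $s_1\mid\cdots\mid s_n$ places the nontrivial factors last; the only genuinely nonroutine estimate is the rank bound in $(b)$.
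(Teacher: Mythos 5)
Your proof is correct, and it is in effect the proof the paper points to rather than an alternative to it: the paper states Proposition~\ref{pro2.1} without proof, attributing it to Fiol~\cite{Fi95}, and the only hint it supplies --- the sentence immediately after the proposition, saying that the isomorphism is given by $\phi(\vecx)=\U\vecx$ --- is exactly the map you construct. Your key computation $\U\M\Z^n=\S\V^{-1}\Z^n=\S\Z^n$ is what justifies that this $\phi$ is well defined and bijective, a point the paper never spells out; after that, your derivations of $(a)$--$(d)$ from the diagonal case (the telescoping product $s_1s_2\cdots s_n=d_n=N$; cyclicity iff $s_1=\cdots=s_{n-1}=1$ iff $d_{n-1}=1$; a prime dividing the smallest nontrivial invariant factor dividing all $r$ of them, forcing $r\le\max\{r_i\}$; and pulling the standard basis classes back through $\U^{-1}$ to get the last $r$ columns as a basis) are all sound and are the standard arguments one would find in \cite{Fi95}. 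One small point in your favour: your indexing of the nontrivial invariant factors as $s_{n-r+1},\ldots,s_n$ is the one consistent with item $(d)$'s definition of $r$; the paper's remark following the proposition writes the decomposition as $\Z_{s_{n-r}}\oplus\cdots\oplus\Z_{s_n}$, which is off by one.
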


In other words, when $s_1=\cdots=s_{n-k-1}=1$ and $s_{n-k}>1$, we have $\S'=\diag(s_{n-k},\ldots,s_n)$ and $\Z^{n}/\M\Z^{n} \cong \Z^{r}/\S'\Z^{r}\cong\Z_{s_{n-r}}\oplus\cdots\oplus\Z_{s_n}$. The isomorphism is given by $\phi(\vecx)=\U\vecx$ and it will be used in the next section.

\section{A new family of Abelian Cayley digraphs}
Let us consider the circulant matrix $\M=\circ(n,-1,-1,\ldots,-1)$, which defines the digraph of commutative steps $G_{\M}=\Cay (\Z^n/\M\Z^n, \{e_1,e_2,\ldots,e_n\})$.
Clearly, $G_{\M}$ is regular with degree $d=n$. Moreover, its order is
\begin{align*}
N=\det \M & =
\det
\left(
\begin{array}{ccccc}
n & -1 & -1 & \cdots & -1 \\
-1 & n & -1 & \cdots & -1 \\
-1 & -1 & n & \cdots & -1 \\
\vdots & \vdots & \vdots & & \vdots \\
-1 & -1 & -1 & \cdots & n
\end{array}
\right)
=\det
\left(
\begin{array}{ccccc}
1 & -1 & -1 & \cdots & -1 \\
1 & n & -1 & \cdots & -1 \\
1 & -1 & n & \cdots & -1 \\
\vdots & \vdots & \vdots & & \vdots \\
1 & -1 & -1 & \cdots & n
\end{array}
\right) \\
 &=\det
\left(
\begin{array}{ccccc}
1 & 0 & 0 & \cdots & 0 \\
1 & n+1 & 0 & \cdots & 0 \\
1 & 0 & n+1 & \cdots & 0 \\
\vdots & \vdots & \vdots & & \vdots \\
1 & 0 & 0 & \cdots & n+1
\end{array}
\right)=(n+1)^{n-1}.
\end{align*}

%\begin{figure}[htb]
%\begin{center}
%\includegraphics*[scale=.75]{proyec-H02}
%\end{center}
%\caption{Obtaining the $q$'s and the $p$'s by projecting $H_0$.}
%\protect\label{fig1}
%\end{figure}

Given $\vecx=(x_1,\ldots,x_n)$, let us denote $\| \vecx \|_1=\sum_{i=1}^n|x_i|$. The following result gives the diameter of $G_{\M}$.

\begin{propo}
Given the circulant matrix $\M=\circ(n,-1,-1,\ldots,-1)$, the digraph of commutative steps $G_{\M}$ defined as above has diameter $k={n\choose 2}=n(n-1)/2$.
\end{propo}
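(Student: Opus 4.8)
The plan is to use vertex-transitivity to reduce the diameter to the eccentricity of the zero class and then to compute distances through an explicit lattice reduction. First I would record the defining relations of the lattice $\M\Z^n$: writing $\vecun=(1,\ldots,1)^\top$, the $j$-th column of $\M$ is exactly $(n+1)\e_j-\vecun$, and summing all $n$ columns gives $\sum_j[(n+1)\e_j-\vecun]=\vecun$. Hence $\vecun\in\M\Z^n$, and consequently $(n+1)\e_j\in\M\Z^n$ for every $j$, so that $\M\Z^n=\langle\vecun,(n+1)\e_1,\ldots,(n+1)\e_n\rangle$ (both inclusions being immediate). Since a directed walk from $\vec0$ of length $\ell$ is a sum of $\ell$ unit vectors, it ends at a vector $\b\ge\vec0$ with $\|\b\|_1=\ell$, so the distance from $\vec0$ to the class of $\a=(a_1,\ldots,a_n)$ equals $\min\{\|\b\|_1:\b\ge\vec0,\ \b\equiv\a\ (\mod\M)\}$. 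Reducing with the two families of relations above (shift all coordinates by $s\vecun$, then reduce each coordinate modulo $n+1$) turns this into
\[
d(\vec0,\a)=\min_{s\in\Z}\ g_{\a}(s),\qquad g_{\a}(s)=\sum_{j=1}^n\big((a_j+s)\bmod(n+1)\big),
\]
where it suffices to let $s$ run over $\{0,1,\ldots,n\}$, and by vertex-transitivity the diameter equals $\max_{\a}\min_s g_{\a}(s)$.

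For the upper bound I would fix $\a$ and exploit two facts about the $n+1$ numbers $g_{\a}(0),\ldots,g_{\a}(n)$. Since $n\equiv-1\pmod{n+1}$,
\[
g_{\a}(s)\equiv\sum_{j}(a_j+s)=\Big(\sum_j a_j\Big)+ns\equiv\Big(\sum_j a_j\Big)-s\pmod{n+1},
\]
so as $s$ ranges over $\{0,\ldots,n\}$ these values run through all residues modulo $n+1$; in particular $g_{\a}(0),\ldots,g_{\a}(n)$ are pairwise distinct non-negative integers. On the other hand, for each fixed $j$ the quantity $(a_j+s)\bmod(n+1)$ is a permutation of $\{0,1,\ldots,n\}$ as $s$ varies, whence $\sum_{s=0}^n g_{\a}(s)=n\cdot\frac{n(n+1)}2=\frac{n^2(n+1)}2$. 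Writing $\mu=\min_s g_{\a}(s)$, distinctness forces the sum of $n+1$ distinct non-negative integers with minimum $\mu$ to be at least $(n+1)\mu+\binom{n+1}2=(n+1)\mu+\frac{n(n+1)}2$; comparing with the exact sum yields $\mu\le\frac{n(n-1)}2=\binom n2$. As this holds for every $\a$, the diameter is at most $\binom n2$.

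For the matching lower bound I would exhibit a class attaining it, the natural candidate being $\a=(0,1,\ldots,n-1)$. For this $\a$ the residues $(a_j+s)\bmod(n+1)$, $j=1,\ldots,n$, are $n$ consecutive residues and hence comprise all of $\{0,\ldots,n\}$ except $(s-1)\bmod(n+1)$, so that $g_{\a}(s)=\frac{n(n+1)}2-\big((s-1)\bmod(n+1)\big)$. This is minimized at $s=0$, giving $d(\vec0,\a)=\frac{n(n+1)}2-n=\binom n2$. Combining the two bounds shows the diameter is exactly $\binom n2$.

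I expect the only genuinely delicate point to be the upper bound, and specifically the observation that the shifted sums $g_{\a}(0),\ldots,g_{\a}(n)$ are forced to be \emph{distinct}: once this is combined with their known total, the elementary inequality for distinct non-negative integers collapses the maximin to $\binom n2$ with no case analysis on the multiplicities of the $a_j$. Checking that $\M\Z^n$ is generated precisely by $\vecun$ and the $(n+1)\e_j$ (so that the reduction to $\min_s g_{\a}(s)$ is an equality, not merely an upper bound) is routine but essential for the lower bound.
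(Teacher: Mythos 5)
Your proof is correct, and it takes a genuinely different route from the paper's. The paper argues entirely through minimum distance diagrams: it characterizes the optimal set $L$ of lattice points by the conditions $0\le x_i\le n-1$ and ``at most $i$ entries $\ge n-i$'' (necessity being proved by the same column-subtraction moves that underlie your reduction), and then, since necessity alone does not pin down $L$, it counts the vectors obeying these conditions through a two-parameter recurrence and an induction giving $f(m,n)=(n-m+1)(n+1)^{m-1}$; the match with $\det\M=(n+1)^{n-1}$ certifies that the characterization is exact, and the diameter is read off as $\|(n-1,n-2,\ldots,0)\|_1=\binom{n}{2}$. You instead decompose the lattice itself, $\M\Z^n=\langle\vecun,(n+1)\e_1,\ldots,(n+1)\e_n\rangle$, obtain the closed-form distance $d(\vec0,\a)=\min_s g_{\a}(s)$, and finish with the observation that the $n+1$ numbers $g_{\a}(0),\ldots,g_{\a}(n)$ are pairwise distinct (they form a complete residue system modulo $n+1$) yet have the fixed total $\frac{n^2(n+1)}{2}$, which forces $\min_s g_{\a}(s)\le\binom{n}{2}$; the class of $(0,1,\ldots,n-1)$ then attains the bound. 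Your route is shorter, needs no induction or counting, sidesteps the completeness-of-characterization issue altogether, and yields a distance formula for every vertex rather than only the diameter. What the paper's heavier machinery buys is the explicit hyper-L description, which is reused almost verbatim for the scaled matrices $\M_{n,m}$ in Proposition~\ref{pro:general} and gives side information such as the count of $n!$ vertices at maximum distance (noted right after the proof); your lattice decomposition, for its part, essentially anticipates the Smith normal form $\diag(1,n+1,\ldots,n+1)$ that the paper brings in only later, in Theorem~\ref{teo:particular}.
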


\begin{proof}
Let $H\subset \Z^n$ be a set of $N$ nonnegative integral vectors, which are different modulo $\M$.
Let $k(H)=\max\{\|\vecx\|_1=\sum_{i}x_i :~\vecx\in H\}$. Then, the diameter of $G_{\M}$ is $k_{G_{\M}}=\min\{k(H):\;H\subset\Z^n\}$. Let us assume that $L$ is a set that attains such a minimum, that is, $k=k(L)$. Thus, $L$ corresponds to an optimal set of lattice points, in the sense that the distance from the origin to a lattice point (vector)
 equals the distance from vertex zero to the corresponding vertex of $G_{\M}$. Then, let us see that  $\vecx\in L$ if and only if $0\le x_i\le n-1$ for $i=1,\ldots,n$, and
\begin{itemize}
\item[$(1)$]
There is at most 1 entry  such that $x_i\ge n-1$,
\item[$(2)$]
There are at most 2 entries  such that $x_i\ge n-2$,
\item[$\vdots$]
\item[$(n-1)$]
There are at most $n-1$ entries  such that $x_i\ge 1$.
\end{itemize}
Indeed, $\vecx\in L$ cannot have any entry, say $x_1\ge n$, since otherwise the vector
$$
\vecy=\vecx-(n,-1,-1,\ldots,-1)\equiv \vecx\ (\mod \M)
$$
would satisfy $\sum y_i =\sum x_i -1$. Moreover,
\begin{itemize}
\item[$(1)$]
There cannot be 2 entries, say $x_1,x_2\ge n-1$, since otherwise the vector
$$
\vecy=\vecx-(n,-1,-1,\ldots,-1)-(-1,n,-1,\ldots,-1)\equiv \vecx\ (\mod \M)
$$
would satisfy $\sum y_i =\sum x_i -2$,
\item[$(2)$]
There cannot be 3 entries, say $x_1,x_2,x_3\ge n-2$, since otherwise the vector
$$
\vecy=\vecx-(n,-1,-1,\ldots,-1)-(-1,n,-1,\ldots,-1)-(-1,-1,n,\ldots,-1)\equiv \vecx\ (\mod \M)
$$
would satisfy $\sum y_i =\sum x_i -3$,
%\item[$\vdots$]
\end{itemize}
and so on.
Now we will show that, under the above conditions, the cardinality of $L$ is $N=\det \M=(n+1)^{n-1}$.
With this aim, for every pair  of integers $0\le m\le n$, let $f(m,n)$ be the number of integral vectors with $m$ entries, $(x_1,\ldots,x_m)\in \Z^m$, $0\le x_i\le n-1$, having at most $i$ entries $x_j\ge n-i$ for every $1\le i\le m$. (Notice that if $m=n$, the case $i=n$ does not imply any restriction.)
Such a number satisfy the following recurrences (by definition, we take $f(0,n)=1$).

If $m<n$,
\begin{equation}
\label{eq1}
f(m,n)=\sum_{i=0}^m {m\choose m-i} f(i,n-1).
\end{equation}

If $m=n$,
\begin{equation}
\label{eq2}
f(m,n)=\sum_{i=0}^{m-1} {m\choose m-i} f(i,n-1).
\end{equation}
Now, by induction, we can prove that
\begin{equation}
\label{eq3}
f(n,m)=(n-m+1)(n+1)^{m-1}.
\end{equation}
\begin{itemize}
\item
$f(0,n)=1$ (by definition).
\item
$f(1,n)=n$ (obvious).
\item
Let assume that \eqref{eq3} holds for $m-1$. Then,

If $m<n$,
\begin{align*}
\label{eq1}
f(m,n) & =\sum_{i=0}^m {m\choose i} f(i,n-1)=\sum_{i=0}^m {m\choose i} (n-i)n^{i-1}
=\sum_{i=0}^m {m\choose i} n^{i}-\sum_{i=0}^m {m\choose i} in^{i-1}\\
& =(m+1)^m-m(n+1)^{m-1}=(n+1-m)(n+1)^{m-1}.
\end{align*}

If $m=n$,
\begin{equation*}
f(m,n) =\sum_{i=0}^{n-1} {n\choose n-i} f(i,n-1)=\sum_{i=0}^{n-1} {n\choose n-i} (n-i)n^{i-1}
=\sum_{i=0}^{n-1} {n-1\choose i} n^{i}=(n+1)^{n-1}.
\end{equation*}
\end{itemize}
Consequently, from \eqref{eq3}, $|L|=f(n,n)=(n+1)^{n-1}=\det \M$.

Finally, notice that, according to the characterization above, $k(L)$ equals the distance from the origin to the vectors of the form $\vecx=(n-1,n-2,\ldots,0)$ (up to permutation of the entries). Then,
$$
k_{G_{\M}}=k(L)=\sum_{i=1}^n x_{i} =1+2+\cdots+(n-1)=\frac{n(n-1)}{2}={n\choose 2},
$$
as claimed.
\end{proof}

\begin{figure}[h]
\centering
\includegraphics[width=0.15\linewidth]{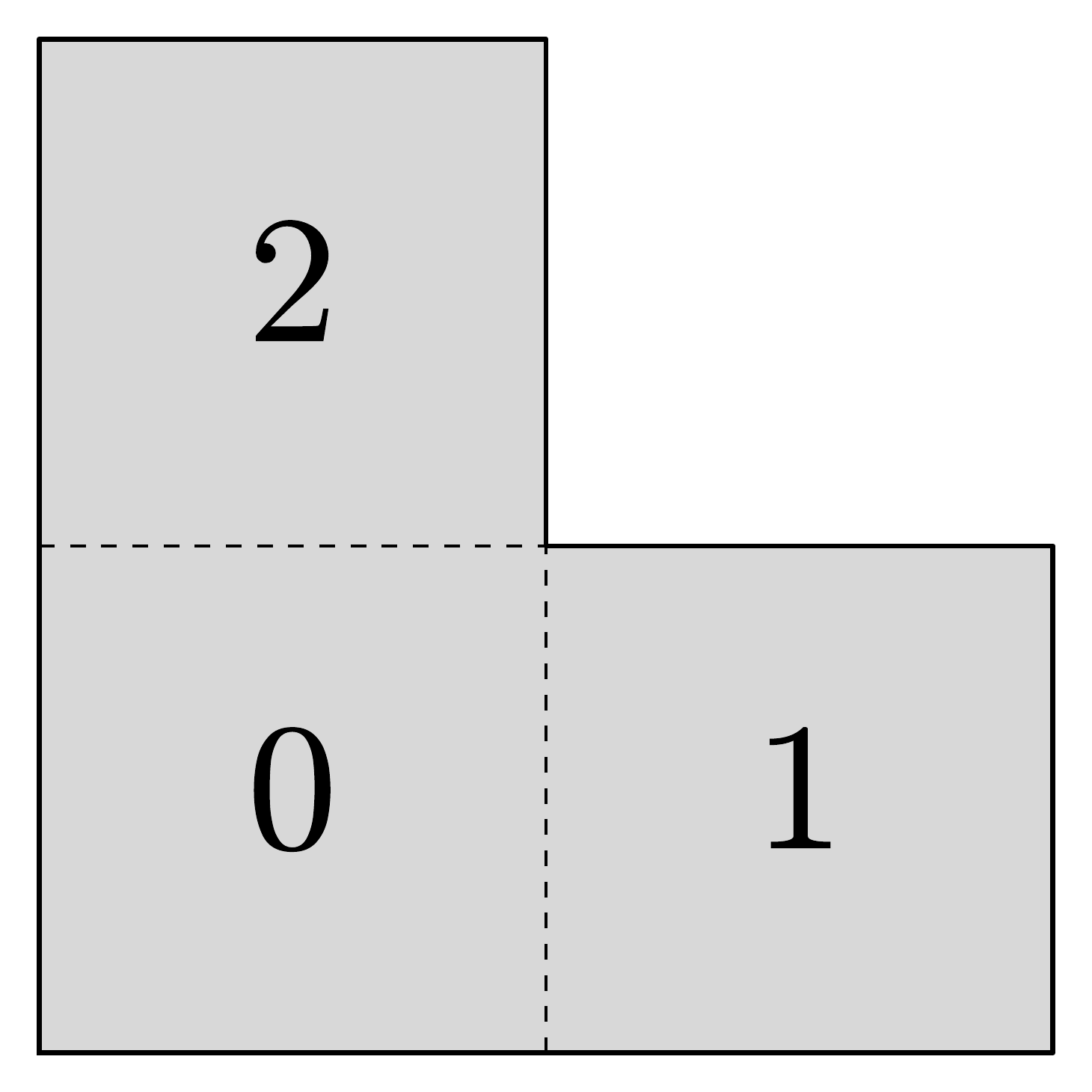}
\hspace{0.05\linewidth}
\includegraphics[width=0.35\linewidth]{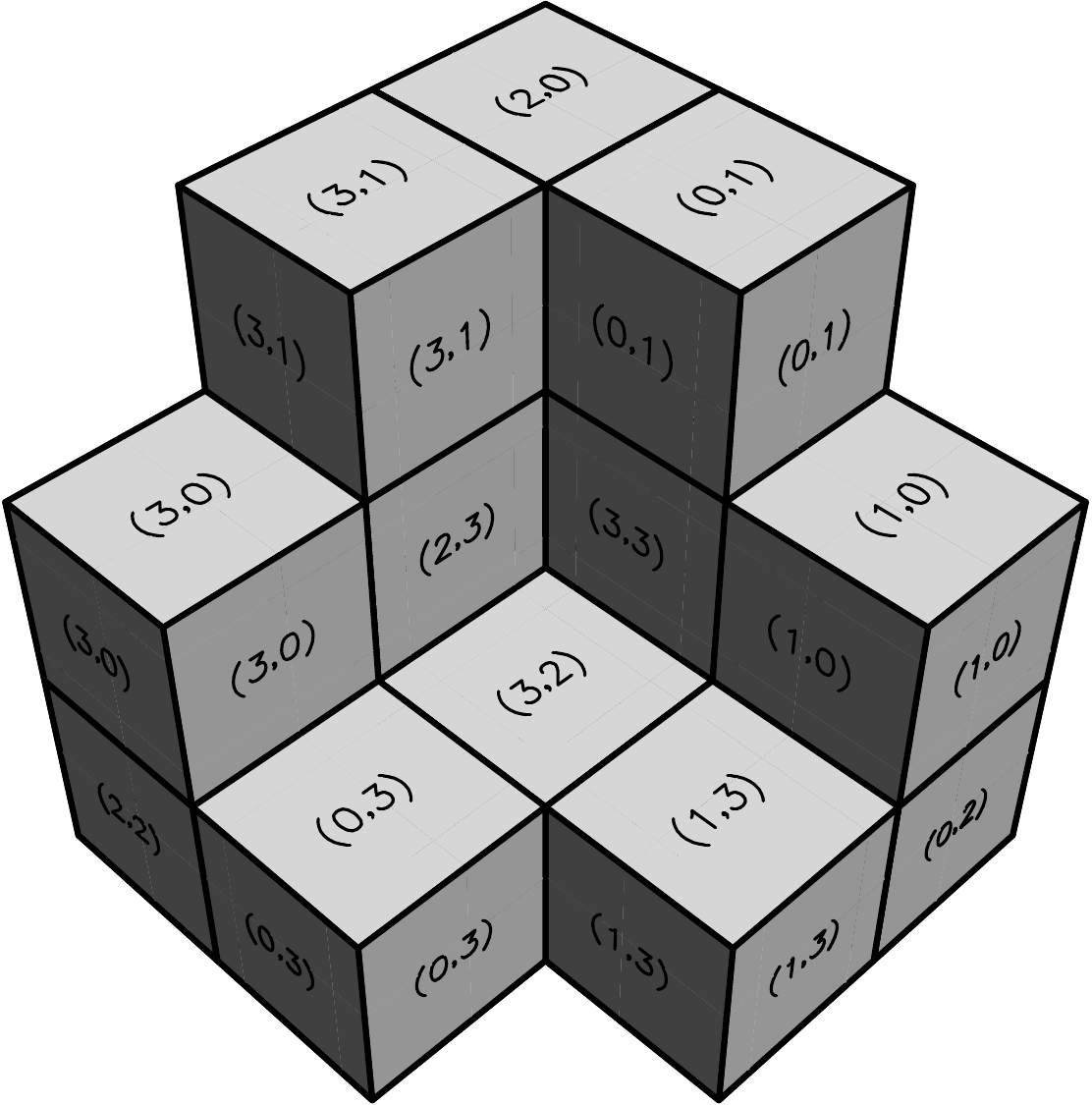}
\caption{Minimum distance diagrams related to $D_2$ and $D_3$}
\label{fig:exemple3d}
\end{figure}

Notice that, from the above proof, the number of vertices in $G_{\M}$ at maximum distance $k={n\choose 2}$ from every vertex is $n!$.

From now on we use the notation $s\Z_t=\Z_t\oplus\stackrel{(s)}{\cdots}\oplus\Z_t$.

\begin{theo}\label{teo:particular}
Set $B_n=\{(1,1,\ldots,1),(2,1,\ldots,1),\ldots,(1,\ldots,1,2)\}\subset\Z^{n-1}$. Then, the Cayley digraph $D_n=\Cay((n-1)\Z_{n+1};B_n)$ has diameter $k_n={n\choose 2}$.
\end{theo}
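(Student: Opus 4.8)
The plan is to show that $D_n$ is isomorphic, as a Cayley digraph, to the digraph $G_{\M}$ of the preceding proposition; since that digraph has diameter $\binom{n}{2}$, the claim follows immediately. First I would record that $\M=(n+1)\I-\J$, where $\J$ is the all-ones $n\times n$ matrix, so that $\M\vecun=\vecun$ and hence $\vecun\equiv\vec0\pmod{\M}$. Reading the columns of $\M$ as the defining relations of $\Z^n/\M\Z^n$, the $j$-th column is $(n+1)\e_j-\vecun$, giving $(n+1)\e_j\equiv\vecun\pmod{\M}$; combined with $\vecun\equiv\vec0$ this yields the presentation $\langle \e_1,\ldots,\e_n \mid (n+1)\e_j=\vec0\ (1\le j\le n),\ \sum_j\e_j=\vec0\rangle$. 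Eliminating $\e_n=-\sum_{j<n}\e_j$ then exhibits an explicit isomorphism $\phi\colon\Z^n/\M\Z^n\to(n-1)\Z_{n+1}$ with $\phi(\e_j)=\eb_j$ for $j<n$ and $\phi(\e_n)=-\vecun$, where $\eb_1,\ldots,\eb_{n-1}$ denotes the standard basis of $(n-1)\Z_{n+1}$. This realises concretely the isomorphism of Proposition~\ref{pro2.1}$(d)$, the Smith normal form of $\M$ being $\diag(1,n+1,\ldots,n+1)$.

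Next I would realign the image generating set $\phi(\{\e_1,\ldots,\e_n\})=\{\eb_1,\ldots,\eb_{n-1},-\vecun\}$ with $B_n$ by a group automorphism of $(n-1)\Z_{n+1}$. The key step is to take $\psi=\I+\J$ (now with $\J$ the all-ones $(n-1)\times(n-1)$ matrix). Then $\psi(\eb_j)=\eb_j+\vecun$ is exactly the element of $B_n$ carrying a $2$ in position $j$, while $\psi(\vecun)=n\vecun=-\vecun$ gives $\psi(-\vecun)=\vecun=(1,\ldots,1)$, the remaining element of $B_n$; thus $\psi$ sends $\{\eb_1,\ldots,\eb_{n-1},-\vecun\}$ bijectively onto $B_n$. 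To confirm that $\psi$ is an automorphism it suffices to check invertibility over $\Z_{n+1}$: since $\I+\J$ has eigenvalues $1$ (with multiplicity $n-2$) and $n$, its determinant equals $n\equiv-1\pmod{n+1}$, a unit, so $\psi\in\mathrm{GL}(n-1,\Z_{n+1})$.

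Composing the two maps, $\psi\circ\phi$ is a group isomorphism $\Z^n/\M\Z^n\to(n-1)\Z_{n+1}$ carrying the generating set $\{\e_1,\ldots,\e_n\}$ onto $B_n$, hence a Cayley digraph isomorphism $G_{\M}\cong D_n$. Since isomorphic digraphs have equal diameters, $D_n$ has diameter $k_n=\binom{n}{2}$, as claimed.

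I expect the only genuinely delicate point to be the bookkeeping of the generators through $\phi$ together with the discovery of the realigning automorphism $\psi=\I+\J$. The all-ones vector $\vecun$ plays a double role here (it is the common class of the columns scaled by $n+1$, and it is simultaneously trivial in the quotient), and one must verify carefully that the $+\vecun$ shift together with the sign flip carries the ``standard basis plus $-\vecun$'' generating set precisely to the ``all-ones plus single-bumped-entry'' set $B_n$. The remaining verifications, namely $\M=(n+1)\I-\J$, $\det(\I+\J)=n$, and the collapse of the presentation to $(n-1)\Z_{n+1}$, are routine.
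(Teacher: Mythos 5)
Your proof is correct, and it shares the paper's overall strategy---exhibit an explicit Cayley digraph isomorphism $G_{\M_n}\cong D_n$ and then quote the preceding proposition for the diameter of $G_{\M_n}$---but the route you take to the isomorphism is genuinely different. The paper runs everything through the Smith normal form machinery of Section~2: it writes down the unimodular matrices $\U_n,\V_n$ of \eqref{eq:UiV} with $\U_n\M_n\V_n=\diag(1,n+1,\ldots,n+1)$ and invokes the isomorphism $\phi(\vecx)=\U_n\vecx$ from Proposition~\ref{pro2.1} and the remark following it; since the columns of $\U_n$ are precisely the elements of $B'_n$, the generators $\e_i$ land on $B'_n$ with no further adjustment, and dropping the trivial $\Z_1$ coordinate gives $D_n$ immediately. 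You instead avoid the SNF as the engine: you read the columns of $\M_n=(n+1)\I-\J$ as the presentation $\langle \e_1,\ldots,\e_n \mid (n+1)\e_j=\vec0,\ \sum_j\e_j=\vec0\rangle$, eliminate $\e_n$ to obtain $(n-1)\Z_{n+1}$ with image generating set $\{\eb_1,\ldots,\eb_{n-1},-\vecun\}$, and then repair the mismatch with $B_n$ by the automorphism $\psi=\I+\J$, whose invertibility over $\Z_{n+1}$ follows from $\det(\I+\J)=n\equiv-1\pmod{n+1}$. All of these steps check out, and in fact your composite $\psi\circ\phi$ agrees, up to permuting which generator is sent to $(1,\ldots,1)$, with the paper's map $\vecx\mapsto\U_n\vecx$. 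The trade-off: the paper's proof is shorter given the Section~2 background, and its matrices $\U_n,\V_n$ are reused verbatim for the scaled matrices $\M_{n,m}$ in the subsequent theorem, whereas your argument is more elementary and self-contained (no appeal to the Smith normal form theorem), at the price of having to discover and verify the realigning automorphism $\psi$ by hand.
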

\begin{proof}
Taking the matrix $\M_n=\circ(n,-1,\ldots,-1)\in\Z^{n\times n}$, it is not difficult to see it has the Smith Normal Form $\S_n=\U_n\M_n\V_n=\diag(1,n+1,\ldots,n+1)$ with unimodular matrices
\begin{equation}
\U_n=\left(
\begin{array}{ccccc}
1 & 1 & 1 & \cdots & 1 \\
1 & 2 & 1 & \cdots & 1 \\
1 & 1 & 2 & \cdots & 1 \\
\vdots & \vdots & \vdots & & \vdots \\
1 & 1 & 1 & \cdots & 2
\end{array}
\right)\qquad\text{and}\qquad
\V_n=\left(
\begin{array}{crrcr}
1 & -1 & -1 & \cdots & -1 \\
0 & 1 & 0 & \cdots & 0 \\
0 & 0 & 1 & \cdots & 0 \\
\vdots & \vdots & \vdots & & \vdots \\
0 & 0 & 0 & \cdots & 1
\end{array}
\right).\label{eq:UiV}
\end{equation}
Thus, we have the isomorphism of digraphs $G_{\M_n}\cong D'_n=\Cay(\Z_1\oplus(n-1)\Z_{n+1};B'_n)$, with $B'_n=\{(1,1,1,\ldots,1),(1,2,1,\ldots,1),\ldots,(1,1,1,\ldots,1,2)\}\subset\Z^n$, given by $\phi(\e_i)=\U_n\e_i=\b_i$ ($\b_i$ are the elements of $B'_n$). Therefore, by Proposition~\ref{pro2.1}, the diameter of $D'_n$ is $k_n={n\choose 2}$. Now the statement follows from the direct digraph isomorphism $D'_n\cong D_n$.
\end{proof}

Figure~\ref{fig:exemple3d} shows minimum distance diagrams related to $D_2=\Cay(\Z_3\{1,2\})$ and $D_n=\Cay(\Z_4\oplus\Z_4,\{(1,1),(2,1),(1,2)\})$ with diameters $k_{D_2}=1$ and $k_{D_3}=3$.

\begin{propo}\label{pro:general}
Let us denote $\M_n=\circ(n,-1,\ldots,-1)$, $n\geq2$, the matrix of Proposition~\ref{pro2.1}. Consider the matrix $\M_{n,m}=m\M_n=\circ(mn,-m,\ldots,-m)\in\Z^{n\times n}$, for $m\geq1$. Then, the commutative step digraph $G_{\M_{n,m}}$ has order $N_{n,m}=m^n(n+1)^{n-1}$ and diameter $k_{n,m}={n+1\choose 2}m-n$.
\end{propo}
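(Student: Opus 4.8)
The plan is to handle the two assertions separately: the order is an immediate determinant computation, while the diameter follows from a ``residue decomposition'' of the congruence classes modulo $\M_{n,m}=m\M_n$.

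For the order, I would simply use that scaling a matrix scales its determinant, so
\[
N_{n,m}=|\det\M_{n,m}|=|\det(m\M_n)|=m^n|\det\M_n|=m^n(n+1)^{n-1},
\]
invoking $\det\M_n=(n+1)^{n-1}$ computed above. Equivalently, since $\S(\M_n)=\diag(1,n+1,\ldots,n+1)$, the Smith normal form of $m\M_n$ is $\diag(m,m(n+1),\ldots,m(n+1))$, so by Proposition~\ref{pro2.1} the underlying group is $\Z_m\oplus(n-1)\Z_{m(n+1)}$, again of order $m^n(n+1)^{n-1}$.

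For the diameter, the key observation is that $m\M_n\Z^n\subseteq m\Z^n$, so $\vecx\equiv\vecy\pmod{\M_{n,m}}$ forces $x_i\equiv y_i\pmod m$ for every $i$. Hence, writing each nonnegative $\vecx$ uniquely as $\vecx=\r+m\vecz$ with $0\le r_i<m$ and $\vecz\ge\vec0$, the residue vector $\r$ is a class invariant; moreover $\vecx-\vecy=m(\vecz-\vecz')\in m\M_n\Z^n$ iff $\vecz\equiv\vecz'\pmod{\M_n}$, so the class of $\vecz$ modulo $\M_n$ is also determined by the class of $\vecx$. This yields a bijection between classes modulo $\M_{n,m}$ and pairs $(\r,[\vecz])\in\{0,\ldots,m-1\}^n\times(\Z^n/\M_n\Z^n)$ (which re-derives the order). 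The crucial step is then to show that the distance $\dist_{\M_{n,m}}(\vecx)$, i.e.\ the minimum $\ell_1$-norm of a nonnegative representative of the class, splits additively:
\[
\dist_{\M_{n,m}}(\vecx)=\|\r\|_1+m\,\dist_{\M_n}(\vecz).
\]
For this I would argue that, because $0\le r_i<m$, a representative $\r+m\vecz'$ is nonnegative iff $\vecz'\ge\vec0$ coordinatewise, whence $\|\r+m\vecz'\|_1=\|\r\|_1+m\|\vecz'\|_1$. Since $\r$ is fixed throughout the class, minimizing over the class reduces to minimizing $\|\vecz'\|_1$ over the $\M_n$-class of $\vecz$, whose minimum is exactly $\dist_{\M_n}(\vecz)$ and is attained at a nonnegative representative (the reduced one from the diameter proof above); this gives both inequalities.

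Finally, the diameter is the maximum of $\dist_{\M_{n,m}}$ over all classes. By the additive splitting and the independence of the two coordinates,
\[
k_{n,m}=\max_{\r}\|\r\|_1+m\cdot\mathrm{diam}(G_{\M_n})=n(m-1)+m{n\choose 2},
\]
where $\max_\r\|\r\|_1=n(m-1)$ is attained at $\r=(m-1,\ldots,m-1)$ and $\mathrm{diam}(G_{\M_n})={n\choose 2}$ was established above (attained at $(n-1,n-2,\ldots,0)$ up to permutation). A short simplification rewrites this as ${n+1\choose 2}m-n$, as claimed. I expect the only genuine obstacle to be the careful justification of the additive identity for the distance, namely that the residue part and the quotient part decouple; the point to get right is that the constraint $\vecx\ge\vec0$ does not link $\r$ and $\vecz$, precisely because each $r_i\in\{0,\ldots,m-1\}$ makes nonnegativity of $\vecx$ equivalent to $\vecz\ge\vec0$, which is what lets both the norm split and the two minimizations be performed independently.
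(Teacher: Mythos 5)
Your proof is correct, but it follows a genuinely different route from the paper. The paper re-runs the geometric ``hyper-L'' analysis directly on the scaled matrix $\M_{n,m}$: it bounds the entries of an optimal representative ($x_i\le mn-1$, at most $k$ entries with $x_i\ge m(n-k+1)-1$), identifies the extremal vectors $\vecx^*=(mn-1,m(n-1)-1,\ldots,2m-1,m-1)$ up to permutation, and re-counts the lattice points to recover the order, essentially repeating the argument used for $\M_n$ with $m$-dependent bookkeeping. You instead reduce the case $m\geq 1$ to the already-proven case $m=1$ via the residue decomposition $\vecx=\r+m\vecz$, $0\le r_i<m$: the key lemma is the additive splitting $\dist_{\M_{n,m}}(\vecx)=\|\r\|_1+m\,\dist_{\M_n}(\vecz)$, and your justification of it is the right one --- since $0\le r_i<m$, nonnegativity of $\r+m\vecz'$ is equivalent to $\vecz'\ge\vec0$, so the norm splits and the minimization decouples. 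This buys several things: the order falls out for free (either from $\det(m\M_n)=m^n\det\M_n$ or from your bijection of classes with pairs $(\r,[\vecz])$), the diameter computation becomes the one-line maximization $n(m-1)+m{n\choose 2}={n+1\choose 2}m-n$, and you avoid the delicate norm-decrease bookkeeping of the direct approach (where, incidentally, the paper's stated decrease $m[k(n-k)+1]$ for subtracting $k+1$ columns should be $(k+1)m$, each column of $\M_{n,m}$ having entry sum $m$ --- a slip your reduction never has to touch). What the paper's approach buys in exchange is explicit geometric information: it exhibits the optimal set $L$ (the minimum distance diagram) and the vertices at maximum distance for $G_{\M_{n,m}}$ itself, rather than inferring the diameter through the quotient structure.
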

\begin{proof}
Let us denote the columns $\M_{n,m}=(\colc_1|\cdots|\colc_n)$, that is the $i$-th column of $\M_{n,m}$ is denoted by $\colc_i=(-m,\ldots,-m,\overbrace{mn}^{i},-m,\ldots,-m)^\top$. Let us assume that $L$ is a hyper-L for the digraph $G_{\M_{n,m}}$, in the same sense as in the proof of Proposition~\ref{pro2.1}. Set $\vecx=(x_1,\ldots,x_n)\in L$. Then, all the entries of $\vecx$ must be $x_i\leq mn-1$. If say $x_1\geq mn$, then $\vecy=\vecx-\colc_1=(x_1-mn,x_2+m,\ldots,x_n+m)\in\Z^n_{\geq0}$. Then, $\vecy\equiv\vecx\pmod{\M_{n,m}}$ and $\|\vecy\|_1=\|\vecx\|_1-m$, thus $\vecx\notin L$; a contradiction.

There cannot be more than $k$ entries of $\vecx$ with $x_i\geq m(n-k+1)-1$, for $k\in\{1,\ldots,n-1\}$. Otherwise, if it were the case that for instance $x_1,\ldots,x_{k+1}\geq m(n-k+1)-1$, then the vector
\[
\vecy_k=\vecx-\colc_1^\top-\ldots-\colc_{k+1}^\top=(x_1-mn+km,\ldots,x_{k+1}-mn+km,x_{k+2}+km,\ldots,x_n+km)
\]
would be $\vecy_k\equiv\vecx\pmod{\M_{n,m}}$, $\vecy_k\in\Z_{\geq0}^n$ and $\|\vecy_k\|_1=\|\vecx\|_1-m[k(n-k)+1]$. Thus $\vecx\notin L$, a contradiction.

Therefore, the maximum $\max\{\|\vecx\|_1:~\vecx\in L\}$ is attained at vectors of type $\vecx^*=(mn-1,m(n-1)-1,\ldots,m2-1,m-1)$ (up to permutations of the entries). So, the diameter is
$$
k_{n,m}=k(L)=\|\vecx^*\|_1={n+1\choose 2}m-n=n\left(\frac{n+1}2m-1\right).
$$
Using similar arguments as in the proof of Proposition~\ref{pro2.1}, it can be seen that there are $N_{n,m}=\det \M_{n,m}=m^n(n+1)^{n-1}$ different vectors in $L$, the order of the commutative step digraph $G_{\M_{n,m}}$.
\end{proof}

\begin{theo}
Consider the generator set $B'_n\subset\Z^n$ given in the proof of Theorem~\ref{teo:particular}. Then, the Cayley digraph $D_{n,m}=\Cay(\Z_m\oplus(n-1)\Z_{m(n+1)};B'_n)$ has diameter $k_{n,m}={n+1\choose 2}m-n$.
\end{theo}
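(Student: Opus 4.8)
The plan is to follow exactly the argument used for Theorem~\ref{teo:particular}, now with the scaled matrix $\M_{n,m}=m\M_n$ in place of $\M_n$, so that the diameter of $G_{\M_{n,m}}$ already computed in Proposition~\ref{pro:general} can be transported to $D_{n,m}$ through a group (and hence digraph) isomorphism. Concretely, I would (i) read off the Smith normal form of $\M_{n,m}$ from that of $\M_n$, (ii) identify the resulting quotient group together with the images of the canonical generators, and (iii) invoke invariance of the diameter under digraph isomorphism.

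First I would compute the Smith normal form of $\M_{n,m}$. Because $\M_{n,m}=m\M_n$ and the unimodular matrices $\U_n,\V_n$ of \eqref{eq:UiV} satisfy $\U_n\M_n\V_n=\S_n=\diag(1,n+1,\ldots,n+1)$, scalar multiplication gives at once
\[
\U_n\M_{n,m}\V_n=m\,\U_n\M_n\V_n=m\S_n=\diag(m,m(n+1),\ldots,m(n+1)).
\]
The one thing to check is that this diagonal matrix is genuinely a Smith normal form, i.e. that its entries form a divisibility chain $s_1\mid s_2\mid\cdots\mid s_n$; this holds because $m\mid m(n+1)$ and all entries after the first are equal. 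Hence the invariant factors of $\M_{n,m}$ are $m,m(n+1),\ldots,m(n+1)$, and by Proposition~\ref{pro2.1} the quotient group is $\Z^n/\M_{n,m}\Z^n\cong\Z_m\oplus(n-1)\Z_{m(n+1)}$, with the isomorphism realized by $\phi(\vecx)=\U_n\vecx$.

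Next I would transport the generators. Since the scalar $m$ leaves the transforming matrix $\U_n$ untouched, the generator images are identical to those in Theorem~\ref{teo:particular}: $\phi(\e_i)=\U_n\e_i=\b_i$, where the vectors $\b_i$ are exactly the columns of $\U_n$ and constitute $B'_n$. Therefore $\phi$ carries the commutative step digraph $G_{\M_{n,m}}$ onto $\Cay(\Z_m\oplus(n-1)\Z_{m(n+1)};B'_n)=D_{n,m}$, yielding a digraph isomorphism $G_{\M_{n,m}}\cong D_{n,m}$. As a digraph isomorphism preserves all distances, and Proposition~\ref{pro:general} already establishes that $G_{\M_{n,m}}$ has diameter ${n+1\choose 2}m-n$, the same value is the diameter of $D_{n,m}$, as claimed.

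I do not expect a real obstacle here: the whole argument is inherited from the proofs of Theorem~\ref{teo:particular} and Proposition~\ref{pro:general}. The only steps requiring care are the verification that $m\S_n$ satisfies the divisibility condition for a Smith normal form (which reduces to $m\mid m(n+1)$) and the bookkeeping check that $\phi$ sends the standard basis onto $B'_n$ rather than onto some rescaled or permuted set. The genuine combinatorial content — counting the lattice points of the hyper-$L$ and locating the farthest vector $\vecx^*$ — was already carried out in Proposition~\ref{pro:general}.
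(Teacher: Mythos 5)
Your proposal is correct and follows essentially the same route as the paper: both obtain the Smith normal form $\U_n\M_{n,m}\V_n=m\S_n=\diag(m,m(n+1),\ldots,m(n+1))$ with the same unimodular matrices as in Theorem~\ref{teo:particular}, identify the quotient group and the images $\phi(\e_i)=\U_n\e_i=\b_i$, and transfer the diameter ${n+1\choose 2}m-n$ of $G_{\M_{n,m}}$ from Proposition~\ref{pro:general} through the digraph isomorphism $G_{\M_{n,m}}\cong D_{n,m}$. Your write-up is in fact more careful than the paper's terse proof, since you explicitly verify the divisibility chain $m\mid m(n+1)$ needed for $m\S_n$ to be a genuine Smith normal form.
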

\begin{proof}
Using the same argument in the proof of Theorem~\ref{teo:particular}, the statement follows from the Smith normal form of the matrix $\M_{n,m}$, i.e. $\S_{n,m}=\diag(m,m(n+1),\ldots,m(n+1))=\U_n\M_{n,m}\V_n$. The unimodular matrices $\U_n$ and $\V_n$ are as in (\ref{eq:UiV}). The digraph isomorphism is now $D_{n,m}\cong G_{\M_{n,m}}$, where $G_{\M_{n,m}}$ is the commutative step digraph of Proposition~\ref{pro:general}.
\end{proof}

In terms of the degree $d=n$ and diameter $k$, we get the number of vertices and density of $D_{n,m}$
\begin{equation}
N_{d,k}=\frac{2^d}{d+1}\left( \frac{k}{d}+1\right)^d\quad\textrm{and}\quad \delta_{d}=\frac1{d+1}\left(\frac2d\right)^d\textrm{ for all }k.\label{eq:od}
\end{equation}
Note that, given a fixed degree $d$, the value of the density remains constant as the diameter $k$ increases.

%Dougherty and Faber (2004): A nonconstructive proof:
%
%There is a positive constant $c$ (not depending on $d$ or $k$), such that, for any fixed $d\ge 2$ and any $k$, there exist Cayley digraphs of Abelian groups on $g$ generators having diameter at most $k$ and number of vertices $N_{d,k}$ satisfying:
%$$
%\tb{N_{d,k}\ge \frac{c}{d(\ln d)^{1+\log_2 e}}\frac{k^d}{d!}+O(k^{d-1})}.
%$$

As mentioned before in the introduction,  Dougherty and Faber gave the following nonconstructive result in \cite{DoFa04}:
There is a positive constant $c$ (not depending on $d$ or $k$), such that, for any fixed $d\ge 2$ and any $k$, there exist Cayley digraphs of Abelian groups  having diameter at most $k$ and number of vertices satisfying:
$$
N_{d,k}\ge \frac{c}{d(\ln d)^{1+\log_2 e}}\frac{k^d}{d!}+O(k^{d-1}).
$$
which, using Stirling's formula, gives
\begin{equation}
\label{their-bound}
N_{d,k}\geq \frac{c}{\sqrt{2\pi}}e^{d-\frac{3}{2}\ln d-(\ln\ln d)(1+\log_2 e)}\left(\frac{k}{d} \right)^d+ O(k^{d-1}),
\end{equation}
with multiplicative factor of $\left(\frac{k}{d} \right)^d$ being
$$
\frac{c}{\sqrt{2\pi}}e^{d-\frac{3}{2}\ln d-(\ln\ln d)(1+\log_2 e)}\sim e^{d-\frac{3}{2}\ln d}.
$$
For large $k$, the order in (\ref{eq:od}) is
\begin{equation}
\label{our-bound}
N_{d,k}=2^{d-\log_2(d+1)}\left(\frac{k}{d} \right)^d+ O(k^{d-1}),
\end{equation}
where the multiplicative factor of $\left(\frac{k}{d} \right)^d$ is
$$
2^{d-\log_2(d+1)}\sim 2^{d-\frac{1}{\ln 2}\ln d}.
$$
Although this last coefficient turns out to be smaller than the one appearing in the theoretical bound of Dougherty and Faber, the explicit constructions given here achieve it and they are asymptotically the only ones known up to date.

%\begin{figure}[htb]
%\begin{center}
%\includegraphics*[scale=1]{graph-no-d-r}
%\end{center}
%\caption{Three drawings of a walk regular graph which is not
%distance-regular.} \protect\label{fig:graph no dist-reg}
%\end{figure}

\end{document}